\long\def\symbolfootnote[#1]#2{\begingroup\def\thefootnote{\fnsymbol{footnote}}\footnote[#1]{#2}\endgroup}
\newcommand{\Z}{\mathbb{Z}}
\renewcommand{\P}{\mathscr{P}}
\newcommand{\Pri}{\ensuremath{\mathbb{P}}}
\theoremstyle{plain}
\newtheorem{theorem}{Theorem}[section]
\newtheorem{proposition}[theorem]{Proposition}
\newtheorem{corollary}[theorem]{Corollary}
\theoremstyle{definition}
\newtheorem{definition}[theorem]{Definition}
\newcommand{\sub}{\subseteq}
\newcommand{\sm}{\smallsetminus}
\renewcommand{\phi}{\varphi}
\newcommand{\ba}{boolean algebra}
\newcommand{\pa}[1]{\left(#1\right)}
\newcommand{\s}[1]{\left\{#1\right\}}
\newcommand{\defeq}{\ensuremath{\stackrel{\text{\tiny def}}{=}}}
\renewcommand{\epsilon}{\varepsilon}
\newcommand{\PoF}{\ensuremath{\P\left(\omega\right)/{\mathrm {fin}}}}
\begin{document}
\title{A note on maximality of ideal-independent sets}
%\dedicatory{aoeuaoeu}
\author{Corey~T. Bruns, University of Wisconsin-Whitewater, brunsc@uww.edu}
%\email{brunsc@uww.edu}
%\urladdr{\url{http://facstaff.uww.edu/brunsc}}
%\address{Department of Mathematical and Computer Sciences, University of Wisconsin-Whitewater, 800 W. Main Street, Whitewater, WI 53190, USA}

%\subjclass[2000]{Primary: 03G05;Secondary: 06E99}
%\keywords{boolean algebra, ideal-independent, pseudo-intersection number, continuum cardinal}

%\begin{abstract}{In this note we derive a property of maximal ideal-independent subsets of boolean algebras which has corollaries regarding the continuum cardinals $\mathfrak p$ and $s_{mm}\left(\PoF\right)$. This solves a problem posed by J.D. Monk in 2008.}
%\end{abstract}
\maketitle
\section{Introduction and Definitions}
We give a solution to a problem originally posed in a draft of J.D. Monk \cite{MR2387943}.  This result is now listed as further fact 4 after theorem 2.16.
In doing so, we will consider some subsets of boolean algebras.  

We will follow S. Koppelberg \cite{Handbook} for notation.  In particular $+$, $\cdot$, and $-$ will used as the boolean operators, and $0$ and $1$ as the least and greatest element.  By extension, the least upper bound of a set $M$ will be denoted $\sum M$.% and the greatest lower bound will be denoted $\prod M$.

\begin{definition}A subset $X$ of a boolean algebra is ideal-independent if $0,1\notin X$ and $\forall x\in X,
x\not\in\left<X\sm\s x\right>^{id}$; equivalently, for distinct $x,x_1,\ldots,x_n\in X$, $x\not\leq x_1+x_2+\ldots +x_n$. 
\end{definition}

By Zorn's lemma, there are maximal ideal-independent sets.

This is used in one of several equivalent definitions in Monk \cite{CINV2} of the spread $s$ of a boolean algebra $A$:  
$$s\pa A=\sup\s{\left|X\right| : X\mbox{ is ideal-independent.}}$$

Related to this cardinal function is the minimaximal version: $$s_{mm}\pa A \defeq\min\s{\left|X\right|: X\mbox{ is infinite and maximal ideal-independent}}.$$
Among the results of Monk \cite{MR2387943} is that it is consistent with ZFC that $${\aleph_0<s_{mm}\pa\PoF<\beth_1}.$$  We will further show how $s_{mm}$ compares to the pseudo-intersection number $\mathfrak p$.

\section{Maximality}
%\section{A necessary condition for maximality}
%We give a solution to a problem posed in a draft of Monk \cite{MR2387943}.

Our main result is a necessary condition for maximality of an ideal-inde\-pendent set. 

\begin{theorem}\label{thm_idindsum1}Let $A$ be a \ba{}. If $X\sub A$ is maximal for
ideal-independence, then $\sum X=1$.
\end{theorem}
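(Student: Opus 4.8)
The plan is to argue by contradiction, exploiting maximality: if $\sum X\neq 1$ then some element witnesses that $X$ can be properly extended while remaining ideal-independent. First I would unwind what $\sum X=1$ means without presupposing that suprema exist in $A$: it is equivalent to the assertion that the only upper bound of $X$ is $1$, i.e.\ that no $a\neq 1$ satisfies $x\leq a$ for every $x\in X$. So I assume toward a contradiction that there is an upper bound $a$ of $X$ with $a\neq 1$, and I set $b=-a$, noting $b\neq 0$. Since $x\leq a$ for all $x\in X$, the element $b$ is disjoint from every member of $X$ (that is, $x\cdot b=0$), whence $b\notin X$; in the nondegenerate situation $b\neq 1$ as well (if $b=1$ then $a=0$ bounds $X$, forcing $X=\emptyset$, which is maximal only in the two-element algebra, a trivial case I would dispose of separately).

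Second, I would verify that $X\cup\s b$ is ideal-independent, which contradicts the maximality of $X$. There are two types of defining inequalities to rule out. For a relation of the form $b\leq y_1+\cdots+y_n$ with the $y_i\in X$: each $y_i\leq a$, so the right-hand side is $\leq a$, and meeting with $b=-a$ gives $b\leq 0$, contradicting $b\neq 0$. For a relation $x\leq y_1+\cdots+y_n$ with $x\in X$ and the $y_i\in X\cup\s b$ distinct from $x$: the key move is to meet both sides with $a$. Since $x\leq a$ the left side is unchanged, while meeting $a$ with the right side annihilates the summand $b$ (because $b\cdot a=0$) and fixes every $X$-summand; this yields $x\leq$ a sum of elements of $X\sm\s x$, contradicting the ideal-independence of $X$. (The subcase where $b$ is the only summand gives $x\leq b$, hence $x\leq a\cdot b=0$, again impossible.)

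The main obstacle, and the heart of the argument, is this second case: engineering a single multiplier ($a$, equivalently projecting away from $b$) that simultaneously preserves all the $X$-data and deletes the new element $b$, so that a putative dependence in $X\cup\s b$ collapses to a genuine dependence in $X$. Everything else is bookkeeping: checking $0,1\notin X\cup\s b$ and that the extension is proper, and separating off the degenerate two-element algebra where the empty set is vacuously maximal. Assuming the theorem is intended for nontrivial $A$, as the applications to $\PoF$ require, this completes the contradiction and shows $\sum X=1$.
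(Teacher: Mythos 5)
Your proof is correct and takes essentially the same route as the paper's: both adjoin the complement of a proper upper bound $a\neq 1$ (the paper calls the bound $b$ and adjoins $-b$) and then verify ideal-independence of the enlarged set by cases, your ``meet both sides with $a$'' step being the same computation as the paper's ``multiply by $x$ and distribute'' in the case where the new element occurs among the summands. The only substantive difference is that you explicitly isolate the degenerate two-element algebra and the $X=\emptyset$ situation, an edge case the paper's proof silently passes over.
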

\begin{proof}
Let $X\sub A$ be ideal-independent and let $b\in A\sm\s 1$ be such that
$\forall x\in X x\leq b$.  We claim that $X'\defeq X\cup\s{-b}$ is ideal-independent
(thus proving the theorem).

We need to show that for distinct $x,x_1,x_2,\ldots,x_n\in X'$, $x\not\leq
x_1+x_2+\ldots+x_n$.  There are three cases to consider.

\begin{enumerate}
\item $-b\not\in\s{x,x_1,x_2,\ldots,x_n}$.

Then $x\not\leq x_1+x_2+\ldots+x_n$ by the ideal-independence of $X$.

\item $-b=x$.

Assume otherwise, that is, $-b\leq x_1+x_2+\ldots+x_n$.
Since $x_1+x_2+\ldots+x_n\leq b$, we have $-b\leq b$, thus $b=1$,
contradiction.

\item $-b=x_1$.

Then as $x\in X$, $x\leq b$, that is, $x\cdot -b = 0$.
By the ideal-independence of $X$, $x\not\leq x_2 + x_3 + \ldots + x_n$, so that
$y\defeq x\cdot\pa{x_2 + x_3 + \ldots + x_n}<x$.

Then $${x\cdot\pa{-b + x_2 + x_3 + \ldots + x_n}=x\cdot -b + x\cdot \pa{x_2 + x_3
+ \ldots + x_n}=0+y<x,}$$ thus $x\not\leq -b + x_2 + x_3 + \ldots + x_n$.
\end{enumerate}
\end{proof}
%Recall that $$s_{mm}\pa A \defeq\min\s{\left|X\right|: X\mbox{ is
%maximal ideal-independent}}.$$

%\begin{corollary}
%If $A$ is infinite, then $s_{mm}\pa A$ is infinite.
%\end{corollary}

It is worth noting that a converse of Theorem \ref{thm_idindsum1} does not hold for infinite sets.
That is, there is an ideal-independent set $X$ with $\sum X=1$ that is not maximal.

\begin{proposition}Let $\Pri=\s{p\in \omega:p \mbox{ is prime}}$.  In the boolean algebra \\${\P(\omega\sm\s{0,1})}$ where $+$ is union, $\cdot$ is intersection, and $-x$ is $\pa{\omega\sm\s{0,1}}\sm x$, set $X=\s{ p\Z^+ : p\in\Pri}$ (where $n\Z^+$ is the set of all nonzero multiples of n).
Then all of the following are true:
\begin{enumerate}

\item $\sum X=1$ 

\item $X$ is an ideal-independent set.

\item $X\cup \s{\Pri}$ is also ideal-independent.
\end{enumerate}
\end{proposition}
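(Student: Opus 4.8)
The plan is to verify the three claims directly, exploiting that in this concrete algebra $\leq$ is just $\sub$ and $\sum$ is union, so that each instance of ideal-independence reduces to producing a single witness to non-containment. For (1) I would note that every integer $n\geq 2$ has a prime divisor $p$, whence $n\in p\Z^+$, while conversely each $p\Z^+\sub\omega\sm\s{0,1}$; therefore $\sum X=\bigcup_{p\in\Pri}p\Z^+=\omega\sm\s{0,1}=1$.

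For (2), fix distinct primes $p,p_1,\dots,p_n$. The governing observation is that a prime is divisible, among all primes, only by itself. Thus $p\in p\Z^+$ but $p\notin p_i\Z^+$ for each $i$ (since $p_i\nmid p$), giving $p\Z^+\not\sub p_1\Z^+ +\dots+p_n\Z^+$. Together with the immediate fact that $0,1\notin X$, this establishes the ideal-independence of $X$.

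For (3), I would follow the three-case template from the proof of Theorem \ref{thm_idindsum1}, organized by the role of the new element $\Pri$ among chosen distinct elements $x,x_1,\dots,x_n\in X\cup\s{\Pri}$. If $\Pri$ appears among none of them, (2) applies verbatim. If $\Pri=x$ is on the left, I need a prime lying in $\Pri$ yet avoiding finitely many $p_i\Z^+$; by the infinitude of primes there is a prime $q\notin\s{p_1,\dots,p_n}$, and $q$ is the required witness since $p_i\nmid q$ for all $i$.

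The main obstacle is the remaining case, where $\Pri$ occurs on the right and $x=p\Z^+$. Here the witness $p$ used in (2) is useless, precisely because $p\in\Pri$. The remedy is to pass to $p^2$: it is composite, so $p^2\notin\Pri$; it is a multiple of $p$, so $p^2\in p\Z^+$; and among primes only $p$ divides it, so $p^2\notin p_i\Z^+$ whenever $p_i\neq p$. Hence $p\Z^+\not\sub\Pri+p_1\Z^+ +\dots+p_{n-1}\Z^+$, which closes the case. It follows that $X\cup\s{\Pri}$ is ideal-independent even though $\sum X=1$, so $X$ is not maximal and the converse of Theorem \ref{thm_idindsum1} indeed fails.
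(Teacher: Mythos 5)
Your proof is correct and follows essentially the same route as the paper: union over prime divisors for (1), the prime $p$ itself as witness for (2), and a composite multiple of $p$ as witness for the hard case of (3). The only cosmetic difference is your choice of witness $p^2$ where the paper uses $rq$ for an auxiliary prime $r$ with $r\Z^+\notin Y$; both work for the same reason.
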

\begin{proof}
\begin{enumerate}

\item $\bigcup X=\omega\sm\s{0,1}$ as every integer other than $0$ and $1$ is a nonzero multiple of a prime, so $\sum X=1$.

\item If $p$ is prime, then $p$ is not the multiple of any other prime, so $p\Z^+\not\leq  p_1\Z^+ + p_2\Z^+ +\ldots + p_n\Z^+$ if $p,p_1,\ldots,p_n$ are all distinct primes.

Thus $X$ is ideal-independent.

\item We have two parts to show that $X\cup \s{\Pri}$ is ideal-independent; Let $Y$ be a finite subset of $X$ and $q,r$ distinct primes such that $q\Z^+\notin Y$ and $r\Z^+\notin Y$.  We must show that $\Pri\not\leq \sum Y$ and that $q\Z^+\not\leq \Pri+\sum Y$.

$q\in\Pri$ and $q\notin \sum Y$, so the first is true.

$rq\in q\Z^+$, but $rq\notin \Pri$ as it is composite and $rq\notin\sum Y$ since neither $r\Z^+\in Y$ nor $q\Z^+\in Y$.

Thus $X\cup\s\Pri$ is ideal-independent and so $X$ is not maximal. 

\end{enumerate}
\end{proof}

\begin{corollary}
For $A$ infinite, $\mathfrak p\pa A \leq s_{mm}\pa A$; in particular\\ $\mathfrak p=\mathfrak p\pa{\PoF}\leq s_{mm}\pa{\PoF}$.
\end{corollary}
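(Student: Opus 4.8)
The plan is to show that every infinite maximal ideal-independent set $X\sub A$ satisfies $\left|X\right|\geq\mathfrak p\pa A$; taking the minimum over all such $X$ then yields $\mathfrak p\pa A\leq s_{mm}\pa A$, and specializing to $A=\PoF$ (where $\mathfrak p\pa{\PoF}=\mathfrak p$) gives the displayed inequality. Recall that $\mathfrak p\pa A$ is the least cardinality of a subset of $A\sm\s 0$ having the finite intersection property but no nonzero lower bound; equivalently, any family of fewer than $\mathfrak p\pa A$ many nonzero elements with the finite intersection property has a nonzero lower bound (a pseudo-intersection). First I would note that, since $A$ is infinite, infinite maximal ideal-independent sets exist—any infinite disjoint family of nonzero elements is ideal-independent and extends to a maximal one by Zorn's lemma—so $s_{mm}\pa A$ is well defined.

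So fix an infinite maximal ideal-independent $X$ and suppose toward a contradiction that $\left|X\right|<\mathfrak p\pa A$. The key step is to pass to the family of complements $F\defeq\s{-x : x\in X}$. I claim $F$ has the finite intersection property: for distinct $x_1,\ldots,x_n\in X$ the product $-x_1\cdots -x_n$ equals $-\pa{x_1+\cdots+x_n}$, and this is nonzero because, $X$ being infinite, there is some $x\in X\sm\s{x_1,\ldots,x_n}$, whence ideal-independence gives $x\not\leq x_1+\cdots+x_n$ and so $x_1+\cdots+x_n\neq 1$. Each $-x$ is nonzero since $1\notin X$, so $F\sub A\sm\s 0$, and $\left|F\right|=\left|X\right|<\mathfrak p\pa A$.

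By the characterization of $\mathfrak p\pa A$, the family $F$ then has a nonzero lower bound $p$, i.e. $p\leq -x$, equivalently $p\cdot x=0$, for every $x\in X$. Consequently $-p$ is an upper bound for $X$ with $-p\neq 1$ (as $p\neq 0$), so the extension construction in the proof of Theorem \ref{thm_idindsum1} shows that $X\cup\s{-\pa{-p}}=X\cup\s p$ is ideal-independent. Since $p$ is nonzero and disjoint from every element of $X$, we have $p\neq 1$ and $p\notin X$, so this properly extends $X$, contradicting maximality; hence $\left|X\right|\geq\mathfrak p\pa A$, as required. I expect no serious obstacle here: the only real content is recognizing that ideal-independence says precisely that the complements $\s{-x : x\in X}$ have the finite intersection property, and that a pseudo-intersection of this family is exactly the complement of a proper upper bound for $X$, which Theorem \ref{thm_idindsum1} forbids for a maximal set.
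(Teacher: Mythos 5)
Your proof is correct and is essentially the paper's argument in dual form: the finite intersection property of $\s{-x : x\in X}$ is exactly the paper's observation that $\sum X'\neq 1$ for every finite $X'\sub X$, and the nonexistence of a nonzero lower bound for that family is exactly the conclusion $\sum X=1$ of Theorem \ref{thm_idindsum1}, so your contradiction argument is the contrapositive of the paper's direct verification that $X$ itself witnesses the minimum defining $\mathfrak p\pa A$. The only point worth making explicit is that your pseudo-intersection characterization of $\mathfrak p\pa A$ coincides with the paper's sum-based definition, which is immediate by taking complements.
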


We recall from Monk \cite{CINV2} that the pseudo-intersection number $\mathfrak p$ is defined as:
$$\mathfrak p \pa A \defeq \min\s{\left|Y\right|:\sum Y = 1 \;\mbox{and}\; \sum Y'\neq 1\;
\mbox{for every finite}\; Y'\sub Y}.$$

\begin{proof}
Let $X$ be maximal ideal-independent (and thus infinite).  By Theorem \ref{thm_idindsum1}, $\sum
X=1$. 

If $X'\sub X$ is finite, $\sum X'\neq 1$, otherwise, take $x\in
X\sm X'$, then $x\leq 1 = \sum X'$, contradicting the ideal-independence of
$X$.  Thus $$\s{\left|X\right|: X\mbox{ is
maximal ideal-independent}}\sub$$ $$\s{\left|Y\right|:\sum Y = 1 \;\mbox{and}\; \sum Y'\neq 1\;
\mbox{for every finite}\; Y'\sub Y}$$ and so $\mathfrak p\pa A \leq s_{mm}\pa
A.$
\end{proof}

Now, $\mathfrak p$ is one of the smaller continuum cardinals, so that $s_{mm}$ is larger is not particularly surprising. However, the method of proof does turn out to be useful with other similar properties. Other set properties involving sums of finite subsets, such as independence and n-independence, are amenable to these methods. In \cite{DisCh12}, we introduce some variations on $\mathfrak i$; their placement in order of continuum cardinals can be similarly determined.

\bibliographystyle{spmpsci}
\bibliography{biblio}
\end{document}